\theoremstyle{plain}
\newtheorem{thm}{Theorem}[section]
\newtheorem{prop}[thm]{Proposition}
\newtheorem{lem}[thm]{Lemma}
\newtheorem*{thm*}{Theorem}
\theoremstyle{definition}
\theoremstyle{remark}
\definecolor{cof}{RGB}{219,144,71}
\definecolor{pur}{RGB}{186,146,162}
\definecolor{greeo}{RGB}{91,173,69}
\definecolor{greet}{RGB}{52,111,72}
                 \def\R{{\mathbb{R}}}        \def\Z{{\mathbb{Z}}}
\newcommand*\colvec[1]{
        \global\colveccount#1
        \begin{bmatrix}
        \colvecnext
}
\def\colvecnext#1{
        #1
        \global\advance\colveccount-1
        \ifnum\colveccount>0
                \\
                \expandafter\colvecnext
        \else
                \end{bmatrix}
        \fi
}
\tikzstyle{vertex}=[circle, draw, inner sep=0pt, minimum size=6pt]
\newtheorem{definition}{Definition}
\newtheorem*{theorem*}{Theorem}
\tikzstyle{vertex}=[circle, draw, inner sep=0pt, minimum size=6pt]
\journal{arxiv}
\begin{document}

\begin{frontmatter}

\title{Matrix representations of transversal matroids}

\author{Austin Alderete}
\corref{mycorrespondingauthor}
\cortext[mycorrespondingauthor]{Corresponding author}
\ead{aaldere@math.utexas.edu}
\address{University of Texas at Austin}

\begin{abstract}
 A transversal matroid $M$ of rank $r$ on $[n]$ can be associated to a family of binary matrices corresponding to different presentations of $M$. We describe those matrices which arise from unique maximal presentations of size $r$- giving a complete characterization when $r=2$-, give an algorithm for producing such a matrix, and show that determining whether a matrix corresponds to a maximal presentation is related to determining the tropical rank of arbitrary matrices. 
\end{abstract}

\begin{keyword}
matroids \sep transversal \sep presentation
\end{keyword}

\end{frontmatter}


\section{Introduction}

Enumerating matroids has been an interest within combinatorics since their conception. Oxley's seminal text provides a wealth of conjectures related to counting. One avenue of attack proceeds by restricting problems to specialized classes of matroids, as was done with lattice-path matroids \cite{Lattice_Path}. Alongside this class of problems is the search for operations that build matroids from existing ones. A fundamental such operation is that of single-element deletion. In 2015, Bonin and de Mier consider the reverse operation, single-element extension. A parallel approach would be the construction of a parameter space for restricted classes of matroids. Then a uniform sampling of the space pushes to uniform sampling of the matroids, enabling probablistic tools to be employed. The motivation behind this paper was to place a subset of matrices in bijection with the class of transversal matroids to allow for arbitrary but uniform construction.

Each transversal matroid of rank $r$ has a unique maximal $r$-presentation. Algorithm 1 provides an easily implementable process for outputting the maximal presentation given an arbitrary $r$-presentation of a transversal matroid. While maximality may be determined from the algorithm itself, Theorem 2.5 gives a necessary and sufficient set of conditions for determining whether a presentation is maximal. Unfortunately, any algorithm capable of checking these conditions is capable of determining tropical rank, a problem known to be NP-hard.

\subsection{Transversal matroids}

In this paper, we restrict our attention to transversal matroids. An introduction to matroid fundamentals can be found in Oxley \cite{Oxley_Matroid}.

Let $E$ be a finite set and let $\mathcal{A} = (A_1,...,A_k)$ be a family of subsets of $E$. A transversal of $\mathcal{A}$ is a tuple $(e_1,...,e_k)$ such that $e_i \in A_i$ and the $e_i$'s are distinct. A transversal of a subsequence $(A_j : j \in J \subseteq [k])$ is a partial transversal of $\mathcal{A}$.

For any such $\mathcal{A}$ and $E$, the set of partial transversals of $\mathcal{A}$ forms the collection of independent sets of a matroid on $E$, denoted $M[\mathcal{A}]$. A transversal matroid $M$ is any matroid $M$ such that $M \cong M[\mathcal{A}]$ for some $\mathcal{A}$ and $\mathcal{A}$ is called a presentation of $M$. We say that a presentation is an $r$-presentation if it is a sequence of length $r$.

Given a presentation $(A_1,...,A_k)$ of a matroid $M$ on $E$, we can associate to it a bipartite graph $\Delta[\mathcal{A}]$ by letting
\[
\mathrm{Vert}(\Delta[\mathcal{A}]) := E \sqcup [k]
\]
and
\[
\mathrm{Edge}(\Delta[\mathcal{A}]) := \{ej : e \in E, j \in [k],~~\mathrm{and}~e \in A_j \}.
\]

We can freely identify the edge set with some subset of $[n] \times [k]$ where $n = |E|$ whenever we are given a labeling $N: E \to [n]$ by $ej \sim (N(e),j)$.

A matching in the bipartite graph $\Delta[\mathcal{A}]$ is a set of edges $ M = \{(e_1,j_1),...,(e_m,j_m)\}$ such that the $e_i$'s are pairwise distinct and the $j_i$'s likewise, so the common underlying vertex set of any two edges in $M$ is empty. The set of all matchings in $\Delta[\mathcal{A}]$ corresponds to the independent sets in $M$.

\subsection{Tropical algebra}

There are several tropical semirings used throughout mathematics \cite{Trop_Geo}. Here, for reasons that will become apparent, we use the max-plus semiring defined as follows.

\begin{definition}(Tropical semiring)
The \textit{max-plus tropical semiring} or simply \textit{tropical semiring} is $(\R \cup -\infty, \oplus, \odot)$ where
\[
a \oplus b = \max\{a,b\} ~~~~;~~~~
a \odot b = a+ b.
\]
\end{definition}

This defines a semiring with additive identity $-\infty$ and multiplicative identity $0$. These operations can be extended to $\R^n$ and $\R^{n\times r}$ by replacing the usual sum and product with their tropical counterparts. E.g.
\[
\begin{bmatrix}
1 & 2 \\
3 & 4
\end{bmatrix}
\odot
\begin{bmatrix}
4 & 3 \\
2 & 1
\end{bmatrix}
=
\begin{bmatrix}
(1 \odot 4) \oplus (2 \odot 2) & 
(1 \odot 3) \oplus (2 \odot 1)\\
(3 \odot 4) \oplus (4 \odot 2) & 
(3 \odot 3) \oplus (4 \odot 1)
\end{bmatrix}
=
\begin{bmatrix}
\max\{5,4\} & \max\{4,3\} \\
\max\{7,6\} & \max\{6,5\}
\end{bmatrix}
=
\begin{bmatrix}
5 & 4\\
7 & 6
\end{bmatrix}
\]

There are two interpretations of the classical rank of a matrix in the tropical setting. We choose to view rank as through the lens of the determinant. We define:

\begin{definition}
The tropical determinant of $M = (m_{ij}) \in \R^{n \times n}$ is
\[
\det_{ \mathrm{Trop}}(M)
=
\bigoplus_{ \sigma \in S_n} \left(
m_{1 \sigma(1)} \odot ... \odot m_{n \sigma(n)}
\right)
\]
\end{definition}

A matrix is said to be \textit{tropically singular} if the maximum of $\det_{Trop}(M)$ is achieved twice. The \textit{tropical rank} $\mathrm{t-rank}(M)$ of a matrix $M$ is then the dimension of its largest non-singular  minor.

As with its classical counterpart, the tropical rank of a matrix is invariant under the taking of a transpose.

We are largely concerned with binary matrices in the tropical setting. The tropical determinant here tells us information about matchings. For future reason, we define a generalized tropical determinant now.

\begin{definition} The \textit{generalized tropical determinant} of a matrix $M = (m_{ij})$ of dimension $n \times r$ with $n \geq r$ is
\[
\det_{\mathrm{Trop}}(M) = 
\bigoplus_{\phi: [r] \hookrightarrow [n]} \left(
m_{\phi(1)1} \odot ... \odot m_{\phi(r)r}
\right).
\]
\end{definition}
By transposition we can always take $r < n$ and in the case in which $r = n$ an injective map $\phi: [r] \hookrightarrow [n]$ is simply an element of $S_n$. Thus, it agrees with the tropical determinant on square matrices.

\section{Maximal representations and algorithm for finding them}

\subsection{Presentations}

Given a transversal matroid of rank $r$, it is always possible to obtain an $r$-presentation for it.

\begin{prop}[Brualdi]
Let $\mathcal{A} = (A_i : i \in J)$ be a presentation of a transversal matroid $M$. If $\phi:  B \to J$ is an injective map from a basis $B$ of $M$ into $J$ such that $\phi(b) = j$ if and only if $b \in A_j$, then $(A_i : i \in \phi(B))$ is also a presentation of $M$. Thus, $M$ has an $r-$presentation $\mathcal{A}'$ where $r = \mathrm{rank}(M)$. Furthermore, if $M$ has no coloops, then all presentations of $M$ have exactly $r(M)$ nonempty sets.
\end{prop}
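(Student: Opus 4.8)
The plan is to work entirely in the bipartite matching picture, using the identification of the independent sets of $M[\mathcal{A}]$ with the matchings of $\Delta[\mathcal{A}]$, and of $\mathrm{rank}$ with maximum matching size. I read the hypothesis on $\phi$ as saying that the edges $\{b\,\phi(b) : b \in B\}$ form a matching of $\Delta[\mathcal{A}]$ saturating the basis $B$, with column set $J' := \phi(B)$ of size $r$. The second assertion is then immediate from the first: a basis $B$ is independent, hence a partial transversal, so a saturating $\phi$ exists, and the first assertion produces the $r$-presentation $(A_i : i \in J')$. The third assertion I would treat separately at the end. So the heart is the first assertion, that $\mathcal{A}' := (A_i : i \in J')$ presents $M$.

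To prove $M[\mathcal{A}'] = M[\mathcal{A}]$ I would show the two rank functions agree on every $S \subseteq E$. Since $\mathcal{A}'$ is a subfamily, every matching of $\Delta[\mathcal{A}']$ is one of $\Delta[\mathcal{A}]$, so $r_{\mathcal{A}'}(S) \le r_{\mathcal{A}}(S)$ for free; the work is the reverse inequality. Fix $S$ and let $\nu$ be a maximum matching of $\Delta[\mathcal{A}]$ covering only ground-set vertices of $S$, chosen among all such maxima to use as few columns outside $J'$ as possible. Suppose $\nu$ used a column $c \notin J'$. I would examine the connected component $P$ of $\phi \cup \nu$ containing $c$: since $c$ carries no $\phi$-edge it has degree one, so $P$ is an alternating path starting at $c$, which can continue past a ground-set element only when that element lies in $B$ and past a column only when that column lies in $J'$. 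Hence the far endpoint of $P$ is either a column of $J'$ missing its $\nu$-edge, or an element outside $B$. In the first case, swapping along $P$ (replacing its $\nu$-edges by its $\phi$-edges) keeps every previously covered element covered, frees $c$, and strictly lowers the number of columns outside $J'$, contradicting the choice of $\nu$. In the second case both endpoints of $P$ are unsaturated by $\phi$, so $P$ is a $\phi$-augmenting path and $\phi \triangle P$ is a matching of size $r+1$, contradicting $\mathrm{rank}(M) = r$. Thus $\nu$ uses only columns of $J'$, giving $r_{\mathcal{A}'}(S) \ge |\nu| = r_{\mathcal{A}}(S)$, and as $S$ was arbitrary, $M[\mathcal{A}'] = M$. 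This exchange/augmenting-path dichotomy is the main obstacle: the delicate point is arranging the extremal choice of $\nu$ so that the two terminal cases are exactly a re-routing gain and a forbidden $(r+1)$-matching.

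For the third assertion, note first that every presentation has at least $r$ nonempty sets, since a basis yields a size-$r$ matching whose $r$ columns are distinct nonempty sets. For the upper bound, suppose $\mathcal{A} = (A_i : i \in J)$ presents $M$ with all sets nonempty and $|J| > r$. By the first assertion some $r$-element subfamily indexed by $J'$ presents $M$; choosing $i_0 \in J \setminus J'$ and using monotonicity of partial transversals in the family (so that $\mathcal{A}' \subseteq \mathcal{A} \setminus \{i_0\} \subseteq \mathcal{A}$ forces $M[\mathcal{A} \setminus \{i_0\}] = M$), the column $i_0$ is droppable while $A_{i_0} \neq \emptyset$. I would then show every $x \in A_{i_0}$ is a coloop: if some such $x$ lay outside a basis, that basis gives a size-$r$ matching of $\Delta[\mathcal{A} \setminus \{i_0\}]$ avoiding $x$, and adjoining the edge $x\,i_0$ produces a matching of size $r+1$ in $\Delta[\mathcal{A}]$, contradicting $\mathrm{rank}(M) = r$. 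Thus $A_{i_0}$ consists of coloops, contradicting the hypothesis. Hence no presentation has more than $r$ nonempty sets, and with the lower bound every presentation has exactly $r$. The only subtlety here is the clean extraction of a droppable nonempty column from the first assertion; once that is in hand the coloop argument is a one-line augmentation.
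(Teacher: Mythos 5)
The paper does not actually prove this proposition --- it is stated as Brualdi's theorem and used as imported background (the text immediately moves on to applying it to $\Delta[\mathcal{A}]$) --- so there is no in-paper argument to compare against. Your proof is correct and complete, and it is essentially the standard alternating-path proof of Brualdi's result. The key step checks out: since $c\notin\phi(B)$ has degree one in $\phi\cup\nu$, its component is a path; the only possible far endpoints are a $\nu$-unsaturated column of $J'$ (in which case the swap preserves the covered subset of $S$ and the matching size while strictly reducing the number of columns used outside $J'$, contradicting your extremal choice) or a ground element outside $B$ (in which case the path is $\phi$-augmenting and produces a partial transversal of size $r+1$, contradicting $\mathrm{rank}(M)=r$). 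Your reading of the hypothesis on $\phi$ as ``$b\in A_{\phi(b)}$,'' i.e.\ a matching saturating $B$, is the right one --- the literal ``if and only if'' in the statement is an overstatement, since an element of $B$ may well lie in several of the $A_j$. The sandwich argument $\mathcal{I}(M[\mathcal{A}'])\subseteq\mathcal{I}(M[\mathcal{A}\setminus\{i_0\}])\subseteq\mathcal{I}(M[\mathcal{A}])$ in the coloop part, and the one-edge augmentation showing every element of $A_{i_0}$ is a coloop, are both sound. What your approach buys is a self-contained, purely graph-theoretic verification of a fact the paper takes on faith; the cost is length, which is presumably why the paper cites rather than proves it.
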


We can apply this theorem directly to $\Delta[\mathcal{A}]$. Given a perfect matching between $B$ and some subset $K$ of $J$, we can create $\Delta[\mathcal{A}']$ by simply taking the induced subgraph on the vertex set $S \sqcup K$ as demonstrated in Figure 1.

\begin{center}
	\begin{figure}[h]
	\center
		\includegraphics[scale=0.25]{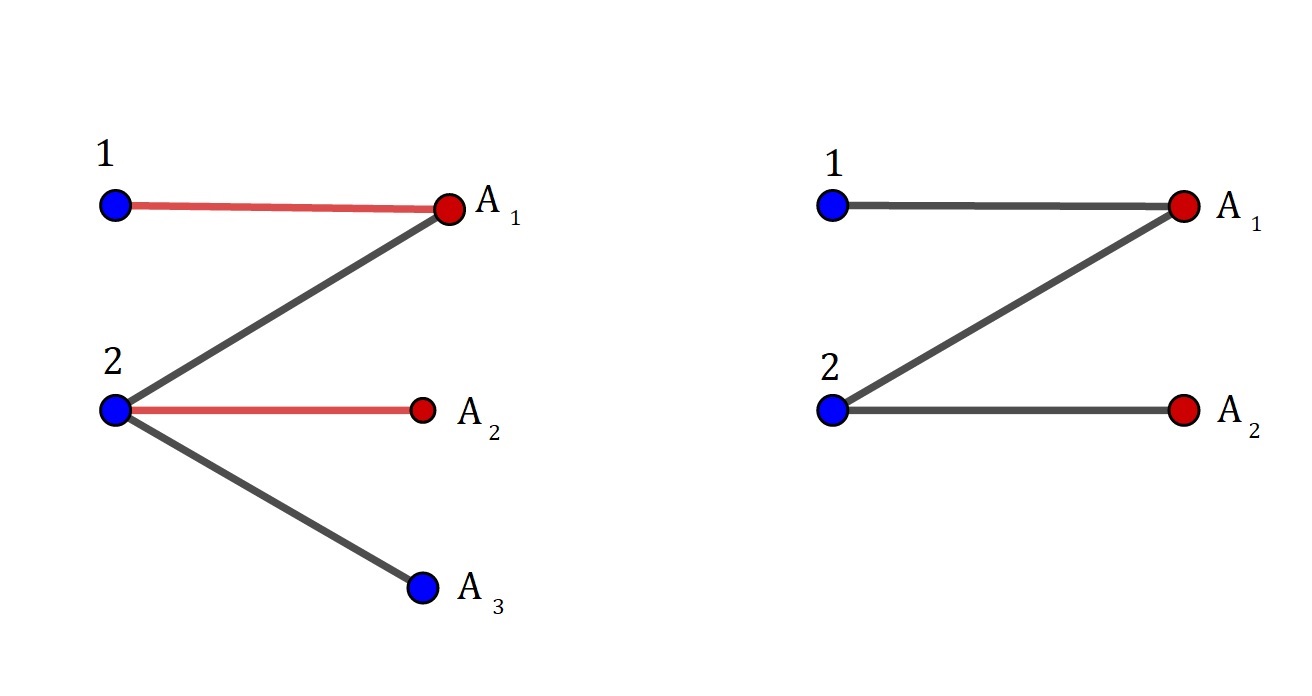}
		\caption{Finding a 2-representation by Prop. 2.1}
	\end{figure}
\end{center}

One can define a partial order on $r$-presentations as follows:
\[
\mathcal{A} = (A_i : i \in [r]) \leq \mathcal{B} = (B_i : i \in [r]) 
~~\iff~~
A_{\sigma(i)} \subseteq B_i~~,~~ \forall i \in [r]
\]
for some $\sigma \in S_r$. Another proposition to be used later allows us to extend a presentation to a greater presentation whenever coloops are present.

\begin{prop}[Bondy-Welsh]
Let $(A_i : i \in [r])$ be a presentation of $M$, a transversal matroid of rank $r$. Then for all $e \in E(M) - A_i$, $(A_1,...,A_i \cup e, A_{i+1},...,A_r)$ is a presentation for $M$ if and only if $e$ is a coloop of $M \backslash A_i$.
\end{prop}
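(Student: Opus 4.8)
The plan is to reduce the statement to a question about which independent sets a single extra edge can create, and then to read off the answer from the rank (equivalently, closure) function of $M$. First I would record the one easy monotonicity fact: since $A_j \subseteq A_j'$ for every $j$ (with equality except at $i$), every partial transversal of $\mathcal{A}$ is a partial transversal of $\mathcal{A}' := (A_1,\dots,A_i\cup e,\dots,A_r)$, so $\mathcal{I}(M) \subseteq \mathcal{I}(M[\mathcal{A}'])$ always. Hence $M[\mathcal{A}'] = M$ iff no \emph{new} independent set is created. Now $\Delta[\mathcal{A}']$ differs from $\Delta[\mathcal{A}]$ only in the single edge $(e,i)$, so any matching witnessing a new independent set must use $(e,i)$, and such a set has the form $\{e\}\cup Y$ with $Y$ a partial transversal of $(A_j : j\in[r]\setminus\{i\})$. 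Comparing ranks in $M$ and $M[\mathcal{A}']$, I would phrase the dichotomy cleanly: a new independent set exists iff there is a $T\subseteq E(M)-e$ that (i) attains its full $M$-rank using only the indices $[r]\setminus\{i\}$, and (ii) satisfies $e\in\cl_M(T)$.

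Next I would set up a deletion dictionary. Deletion in transversal matroids is presented coordinatewise, so $M\backslash A_i$ is presented by $(A_j\setminus A_i : j\in[r])$, in which the $i$-th set is empty. The key observation is that for any $S\subseteq E(M)\setminus A_i$ no element of $S$ is joined to the index $i$ in $\Delta[\mathcal{A}]$, so matchings of $S$ never use $i$ and therefore $r_{M\backslash A_i}(S)=r_M(S)$. Applying this to $S=E(M)\setminus A_i$ and to $S=E(M)\setminus(A_i\cup e)$ shows that $e$ is a coloop of $M\backslash A_i$ exactly when $e\notin\cl_M\!\big(E(M)\setminus(A_i\cup e)\big)$. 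One direction of the theorem is then immediate: if $e$ is \emph{not} a coloop, set $T_0:=E(M)\setminus(A_i\cup e)$; since $T_0$ avoids $A_i$ it automatically attains its full $M$-rank without the index $i$, giving (i), and the failure of the coloop condition is precisely $e\in\cl_M(T_0)$, giving (ii). Thus $T_0$ witnesses a new independent set and $M[\mathcal{A}']\neq M$.

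For the converse I would assume $e$ is a coloop of $M\backslash A_i$ and rule out (i)$+$(ii). Take any $T$ satisfying (i), with a basis $B_T$ of $T$ matched by $\mu$ into $[r]\setminus\{i\}$; the goal is to show $B_T+e\in\mathcal{I}(M)$, so that $e\notin\cl_M(T)$. Since $B_T\setminus A_i$ is independent and lies in $E(M)\setminus A_i$, the coloop hypothesis forces $(B_T\setminus A_i)+e$ to be independent in $M\backslash A_i$, hence matched by some $\nu$ into $[r]\setminus\{i\}$ by edges of $\Delta[\mathcal{A}]$. I would then run an augmenting-path argument on $\mu\oplus\nu$: its only element-vertices of degree one are $(B_T\cap A_i)\cup\{e\}$, and the index $i$ is left exposed by $\mu$ precisely because $e\notin A_i$ forced $\mu$ to avoid $i$. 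The component containing $e$ is a path that either ends at an exposed index, which is already an augmenting path for $\mu$, or ends at some $x\in B_T\cap A_i$; in the latter case one appends the edge $(x,i)$, which exists because $x\in A_i$ and which reaches the exposed index $i$, again producing an augmenting path. Augmenting $\mu$ along it keeps every vertex of $B_T$ covered and additionally covers $e$, so $B_T+e\in\mathcal{I}(M)$.

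I expect the final step to be the main obstacle. Everything before it is bookkeeping about matchings and the rank function, but combining the two matchings $\mu$ and $\nu$ must be done carefully, and the crucial, easy-to-miss point is that a path terminating inside $A_i$ can always be pushed one more edge to the unused index $i$. This is exactly where the hypothesis $e\notin A_i$ (as opposed to $e$ merely missing some other set) is used, and it is what makes the coloop condition both necessary and sufficient.
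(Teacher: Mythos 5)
The paper does not prove this proposition: it is imported from the literature (attributed to Bondy--Welsh) and used as a black box, so there is no in-paper argument to compare yours against. Judged on its own, your proof is correct and complete. The reduction is sound: enlarging $A_i$ only adds independent sets, so the new family presents $M$ iff no new independent set appears; any new independent set must use the single new edge $(e,i)$, hence has the form $B_T+e$ with $B_T$ matched into $[r]\setminus\{i\}$; and the coloop condition translates correctly through $r_{M\backslash A_i}=r_M$ on subsets of $E(M)\setminus A_i$. The non-coloop direction via $T_0=E(M)\setminus(A_i\cup e)$ is right, and the converse via the symmetric difference $\mu\oplus\nu$ works: the degree-one element vertices are exactly $(B_T\cap A_i)\cup\{e\}$, and the component of $e$ is a path that either terminates at an index vertex --- which is necessarily $\mu$-exposed, since its unique edge in $\mu\oplus\nu$ is a $\nu$-edge and a second $\mu$-edge there would force two $\nu$-edges at that vertex --- or terminates at some $x\in B_T\cap A_i$, where appending $(x,i)$ reaches the index $i$ that both matchings avoid. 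That last step is indeed the crux, and you correctly identify it as the place where $e\notin A_i$ and the availability of index $i$ are used. The only spot worth tightening in a final write-up is the parity/exposure claim for a path ending at an index vertex, which you assert rather than justify, but it is standard augmenting-path reasoning and not a gap.
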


\subsection{Maximal representations}

Under the previous ordering one can talk about minimal and maximal elements. While there are many minimal presentations, the following theorem from Bondy and Mason allows us to speak of the maximal $r$-presentation of a matroid as an equivalence class of presentations under the reordering of their sets\cite{Presentation_Matroid} \cite{Extension_Matroid}\cite{Welsh_Matroid}.

\begin{prop}[Bondy-Mason]
For any transversal matroid $M$ of rank $r$, there exists a maximal presentation $\mathcal{A}$ such that if $\mathcal{B}$ is another $r$-presentation of $M$, then $\mathcal{B} \leq \mathcal{A}$.
\end{prop}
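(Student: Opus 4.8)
The plan is to obtain the greatest element from two facts: finiteness and up-directedness. Because $E(M)$ is finite there are only finitely many $r$-tuples of subsets of $E(M)$, hence finitely many $r$-presentations; passing to equivalence classes of presentations under reordering of their sets turns the preorder $\leq$ into a genuine finite partial order (antisymmetry follows since $A_{\sigma(i)} \subseteq B_i$ and $B_{\tau(i)} \subseteq A_i$ for all $i$ force equalities by comparing cardinalities), on which maximal elements exist. It therefore suffices to show this order is up-directed, i.e. that any two $r$-presentations $\mathcal{A}$ and $\mathcal{B}$ admit a common upper bound. Granting this, let $\mathcal{A}$ be any maximal element and $\mathcal{B}$ arbitrary; a common upper bound $\mathcal{C} \geq \mathcal{A}, \mathcal{B}$ satisfies $\mathcal{C} = \mathcal{A}$ by maximality of $\mathcal{A}$, so $\mathcal{B} \leq \mathcal{A}$. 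Thus the maximal element is unique and dominates every presentation, which is exactly the assertion.

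To produce upper bounds I would use Proposition 2.2 as the only admissibility test, enlarging a presentation one element at a time. For a fixed set $A_i$, adjoining $e \notin A_i$ is legitimate precisely when $e$ is a coloop of $M \setminus A_i$, a condition depending only on $A_i$ and $M$ and not on the other sets of the presentation; hence each set may be saturated independently. Iterating the extension on $A_i$ terminates at $E \setminus \mathrm{cyc}_M(E \setminus A_i)$, where $\mathrm{cyc}_M(X)$ denotes the largest coloop-free (cyclic) subset of $X$, obtained by repeatedly deleting coloops from $M|_X$. This produces from any $\mathcal{A}$ a saturated presentation $\mathcal{A}^{\mathrm{sat}}$ admitting no further single-element extension, and the problem reduces to showing that $\mathcal{A}^{\mathrm{sat}}$ does not depend on the starting presentation.

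This presentation-independence is the crux, and it is where naive approaches break down: $A_i \cup B_j$ need not present $M$, and a priori the saturated complements $\mathrm{cyc}_M(E \setminus A_i)$ and $\mathrm{cyc}_M(E \setminus B_j)$ arising from $\mathcal{A}$ and $\mathcal{B}$ could be different sets. I would resolve this by identifying the complements occurring in any saturated presentation with cyclic flats of $M$ and counting them intrinsically: for each cyclic flat $F$ the number $\nu(F)$ of sets equal to $E \setminus F$ should be forced by the system $\sum_{F' \supseteq F} \nu(F') = r - \mathrm{rank}(F)$ over cyclic flats $F' \supseteq F$, solved by Möbius inversion over the lattice of cyclic flats. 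Showing that these multiplicities are well defined, nonnegative, and realized by every saturated presentation yields that each presentation saturates to the same family $\mathcal{A}_{\max}$; since $\mathcal{B} \leq \mathcal{B}^{\mathrm{sat}} = \mathcal{A}_{\max}$ for every $\mathcal{B}$, this $\mathcal{A}_{\max}$ is the required greatest presentation. The main obstacle is this counting step — equivalently, proving up-directedness — since it is precisely the point at which the combinatorics of the cyclic-flat lattice, rather than the elementary extension move of Proposition 2.2, carries the argument.
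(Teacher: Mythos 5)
This proposition is quoted by the paper from Bondy and Mason without proof, so there is no in-paper argument to compare against; your proposal has to stand on its own as a proof, and as written it does not. The framing is sound — finiteness gives maximal elements, and a greatest element would follow from up-directedness — and the saturation step via Proposition 2.2 is legitimate: whether $e$ is a coloop of $M\setminus A_i$ depends only on $A_i$ and $M$, and repeatedly deleting coloops from $M\setminus A_i$ terminates at the union of its circuits, so each $A_i^{\mathrm{sat}}$ is well defined and depends only on $A_i$. But the entire content of the proposition is concentrated in the step you defer: showing that $\mathcal{A}^{\mathrm{sat}}$ is independent of the starting presentation. You propose to do this by identifying the complements $E\setminus A_i^{\mathrm{sat}}$ with cyclic flats and forcing their multiplicities via the system $\sum_{F'\supseteq F}\nu(F') = r - \mathrm{rank}(F)$, but neither ingredient is established: saturation only guarantees that $E\setminus A_i^{\mathrm{sat}}$ is coloop-free, not that it is a flat of $M$, and the counting identity is asserted ("should be forced") rather than derived for an arbitrary saturated presentation. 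Proving those two facts is essentially equivalent to the theorem itself, so the argument as it stands is circular at its core; your own closing sentence concedes that the decisive step is missing.

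There is a second, quieter gap in the reduction. Even granting that $\mathcal{A}^{\mathrm{sat}}$ admits no single-element extension, concluding that it is maximal (let alone greatest) in the order $\leq$ requires knowing that whenever $\mathcal{A} < \mathcal{C}$ for two presentations, one can pass from $\mathcal{A}$ to $\mathcal{C}$ by a chain of single-element extensions each of which is itself a presentation — i.e., that every element of $C_i\setminus A_i$ is a coloop of $M\setminus A_i$. Proposition 2.2 governs one element added to one set of a presentation; it does not by itself compare two presentations differing in many sets and many elements, so this linking claim needs its own argument. To repair the proof you would need either to prove the up-directedness/linking lemma directly (this is the substance of Bondy and Mason's original argument) or to carry out the cyclic-flat multiplicity computation in full, including the flatness of the saturated complements and the nonnegativity and uniqueness of the $\nu(F)$.
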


\begin{definition}[Representation of a presentation]
Given a matroid $M$ with presentation $\mathcal{A}$, a matrix $X$ is called a \textit{representation} of $\mathcal{A}$ if $X$ is the biadjacency matrix of $\Delta[\mathcal{A}]$. $X$ is known as a \textit{viable} representation if $r(M) = |\mathcal{A}|$. Similarly, $X$ is called a \textit{maximal} representation if $\mathcal{A}$ is maximal.
\end{definition}

All maximal representations are viable representations. We now present an algorithm whose input is a viable representation of a matroid $M$ and whose output is the maximal representation of $M$. Indexing begins at one for what follows.

\begin{algorithm}
\caption{Algorithm for obtaining a maximal representation}\label{euclid}
\begin{algorithmic}
\State \textbf{Input:} $X$ a viable representation
\State \qquad RowCount(X) := n; ColCount(X) := r;
\State \% We obtain the bipartite graph described by $X$
\State \qquad $G := (V(G) = [n] \sqcup [r],E(G) = \{(a,b): X_{ab} \neq 0\})$.
\State \% For each right-hand vertex (column) we form an induced graph $G_j$ by deleting all adjacent vertices and any edge connected to an adjacent vertex. Note that this deletes far more than just taking the induced graph on $G-\{j\}$.
\State \qquad \textbf{for} $1 \leq j \leq r$:
\State \qquad \qquad $V(G_j) := ([n]-\{v: (v,j) \in E(G)\}) \sqcup ([r]-\{j\})$
\State \qquad \qquad $E(G_j) := \{(a,b) : b \neq j~,~~(a,j) \not \in E(G)\}$
\State \qquad \qquad $M_j := \left\lbrace S : S\subset\mathrm{Left}(V(G_j))~,~S~ \mathrm{has~a~perfect~matching~into~}\mathrm{Right}(V(G_j))~, |S| \geq |S'| \right.$
$\left. ~ \mathrm{for~any~other~perfect~matching}~S' \right\rbrace$.
\State \% $M_j$ is the set of maximal subsets of left vertices of $G_j$ which are perfectly matched into the set of right vertices of $G_j$. We then redefine the graph $G$ itself by adding in all of the edges that were contained in every maximal matching.
\State \qquad \qquad $G: = (V(G) = [n] \sqcup [r],
E(G) := E(G) \bigcup \left\lbrace
(i,j) : i \in \cap_{M \in M_j} M
\right\rbrace
)$
\State \textbf{Output:} The biadjacency matrix for $G$.
\end{algorithmic}
\end{algorithm}

\begin{thm}
Algorithm 1 above outputs a maximal representation for a matroid $M$ when given a viable representation $X$.
\end{thm}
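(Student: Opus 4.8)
The plan is to show that Algorithm~1 enlarges each set of the input presentation by exactly the coloops of the corresponding deletion, and that this single pass already produces the maximal presentation guaranteed by the Bondy--Mason theorem (Proposition~2.3). Throughout, write $\mathcal{A} = (A_1,\dots,A_r)$ for the presentation encoded by the input $X$.

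First I would identify the combinatorial objects produced inside the loop. The graph $G_j$ is obtained by discarding column $j$ together with every left vertex lying in $A_j$, so $G_j$ is the bipartite graph of the family $(A_i\setminus A_j : i\neq j)$, which is a presentation of the deletion $M\setminus A_j$. Matchable subsets of its left vertices are exactly the partial transversals of this family, i.e. the independent sets of $M\setminus A_j$; hence the maximum matchable sets collected in $M_j$ are precisely the bases of $M\setminus A_j$, and $\bigcap_{S\in M_j} S$ is the set $C_j$ of coloops of $M\setminus A_j$. The one genuinely delicate bookkeeping point here is that the loop mutates $G$ as it runs, so when column $j$ is processed the earlier columns have already been enlarged. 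I would handle this by an induction showing that at the start of each iteration $G$ still encodes a genuine presentation of $M$, whence $G_j$ still represents $M\setminus A_j$. Because $M\setminus A_j$ depends only on $A_j$, which is untouched until column $j$ is reached, the computed set $C_j$ is the coloop set of $M\setminus A_j$ independently of the processing order.

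Next I would close this induction and pin down the output. By the Bondy--Welsh criterion (Proposition~2.2), an element $e\notin A_j$ may be adjoined to $A_j$ without changing the matroid precisely when $e$ is a coloop of $M\setminus A_j$; and since deleting one coloop of a matroid leaves the remaining coloops coloops, the whole set $C_j$ may be adjoined at once. Feeding this through the induction shows that every intermediate graph represents a presentation of $M$ and that the final output represents $\mathcal{A}^{*} = (A_1\cup C_1,\dots,A_r\cup C_r)$, a bona fide presentation of $M$.

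It remains to prove that $\mathcal{A}^{*}$ is the maximal presentation, and this is the step I expect to be the main obstacle, since a priori adding coloops only once might fail to saturate every set. By Proposition~2.3 there is a unique maximal presentation $\widehat{\mathcal{A}}$ with $\mathcal{A}^{*}\leq\widehat{\mathcal{A}}$; after reindexing I may assume $A_j\subseteq A_j^{*}\subseteq\widehat{A}_j$ for every $j$. For the reverse inclusion, take $e\in\widehat{A}_j\setminus A_j$ and consider the family obtained from $\mathcal{A}$ by inserting $e$ into $A_j$. It is contained componentwise in $\widehat{\mathcal{A}}$, and since a larger family of sets retains all the partial transversals of a smaller one, its independent sets are sandwiched between those of $M[\mathcal{A}]=M$ and those of $M[\widehat{\mathcal{A}}]=M$; hence this family is again a presentation of $M$. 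The Bondy--Welsh criterion then forces $e$ to be a coloop of $M\setminus A_j$, i.e. $e\in C_j\subseteq A_j^{*}$. Therefore $\widehat{A}_j\subseteq A_j^{*}$, so $A_j^{*}=\widehat{A}_j$ and the output is exactly the maximal representation. The two points demanding the most care are this monotonicity/sandwich argument and the inductive verification that the mutated graph keeps representing $M$ throughout the loop.
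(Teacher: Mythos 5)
Your proof is correct and its first half — reading the loop as the Bondy--Welsh coloop extension (Proposition 2.2) applied to the whole coloop set $C_j$ of $M\setminus A_j$ at once, with the observation that deleting one coloop keeps the others coloops, plus the induction that the mutated graph still presents $M$ — is exactly the paper's argument, stated with more care about the order-dependence that the paper waves at with ``the order does not matter.'' Where you genuinely diverge is the final maximality step. The paper argues that the output is stable under further iteration, that Bondy--Welsh is a biconditional so no single-element extension of the output is a presentation, and that any two comparable presentations are linked by a chain of single-element extensions; maximality follows. You instead compare directly with the Bondy--Mason maximal presentation $\widehat{\mathcal{A}}$: for $e\in\widehat{A}_j\setminus A_j$ you insert $e$ into the \emph{original} $A_j$, use monotonicity of partial transversals to sandwich the resulting matroid between $M[\mathcal{A}]=M$ and $M[\widehat{\mathcal{A}}]=M$, and then invoke the ``only if'' direction of Bondy--Welsh to conclude $e\in C_j$, giving $\widehat{A}_j\subseteq A_j\cup C_j$ outright. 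Your route buys a cleaner and more self-contained conclusion — it needs neither the stability claim (``repeating it cannot add new elements,'' which the paper leaves as easily verified) nor the chain-linking claim — at the cost of explicitly invoking uniqueness of the maximal presentation from Proposition 2.3, which the paper also uses but only to dispose of the ordering issue. Both arguments are sound; yours closes the one step the paper leaves loosest.
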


\begin{proof}
Put into the language of matroids, Algorithm 1 is the repeated application of Proposition 2.3, i.e. for each set $A_i$ in the presentation (right-vertices), we take $E(M) -A_i$ (delete adjacent vertices), and look for coloops, that is, elements that appear in every basis of $M \backslash A_i$ (left vertices that appear in every maximal matching). Therefore, at every iteration of the for-loop, a presentation greater than or equal to the initial presentation is produced. It can easily be vertified that if the process is completed at least once, repeating it with any $A_i$ cannot add new elements anywhere in the presentation. Given coloops $e_1,e_2$ of $M \backslash A_i$, it follows that $e_2$ is a coloop of $M \backslash (A_i \cup e_1)$ and so proposition 2.3 applies for sets of coloops and not just single elements at a time. This is used above.

It follows that the algorithm above producese a viable representation and that the associated presentation $\mathcal{B}$ is the largest that can be formed by coloop extensions. Moreover, as Proposition 2.3 is a biconditional statement, there can be no larger presentations formed by extending a set in the family by a single element. But any two presentations with one larger than the other can be linked by intermediate presentations obtained in this way, so $\mathcal{B}$ must be the maximal presentation. By 2.2, $\mathcal{B}$ is unique (up to permutation) and so the order in which the algorithm proceeds (as it indexes $j$ for $1 \leq j \leq r$ does not matter.
\end{proof}

The above algorithm can also be used for maximality testing by simply checking whether the input equals the output. The algorithm is a direct result of the above propositions and is suggested in  \cite{Extension_Matroid}. Classification of transversal matroids by their maximal representation is an appealing notion given that they are matrices and unique up to permutation. There is a difference between labeled and unlabeled, which we note here. If $M$ is an unlabeled matroid with representation $X$, then row and column permutations of $X$ are also representations of $M$. However, if $M$ is a labeled matroid, then only column permutations of its representations are guaranteed to again be representations of $M$. The properties of viability and maximality are preserved likewise.

\subsection{Matrix representations, blocks, and conditions}

A Tutte matrix of a bipartite graph is a matrix $T = (t_{ij})$ where $t_{ij} = 0$ if there is no edge $\{i,j\}$ and an indeterminant otherwise. It is well-known that non-vanishing minors (under the classical determinant) correspond to matchings in the graph \cite{Tutte}.

\begin{definition}[Blocks]
If $X$ is a matrix and $c_i$ is the $i$th column of $X$, then the block $X(c_i)$ is defined by deleting the column $c_i$ from $X$ along with all rows which have a non-zero entry in column $c_i$.
\end{definition}

Example: consider the binary matrix
\[
X = 
\begin{bmatrix}
1 & 1 & 1 \\
0 & 1 & 1 \\
0 & 0 & 1 \\
0 & 0 & 1 \\
1 & 0 & 1 \\
1 & 1 & 1 \\
\end{bmatrix}.
\]

Then,
\[
X(c_1) = 
\begin{bmatrix}
\boxtimes & \boxtimes & \boxtimes \\
\boxtimes & 1 & 1 \\
\boxtimes & 0 & 1 \\
\boxtimes & 0 & 1 \\
\boxtimes & \boxtimes & \boxtimes \\
\boxtimes & \boxtimes & \boxtimes \\
\end{bmatrix}
=
\begin{bmatrix}
1 & 1 \\
0 & 1 \\
0 & 1 \\
\end{bmatrix}.
\]

Note that $X(c_1)$ would describe the matroid on $E(M)- A_1$ under our current system. As such, if $X$ is a Tutte matrix, then the rank of this matroid can be easily checked. However, the condition on coloops requires something more, which appears in Theorem 2.5.

Equivalently, in the binary setting one could consider $X(c_k)$ to be
\[
X(c_k) = (\tilde{x}_{ik} x_{ij})
\]
where $\tilde{x}_{ik} = 1-x_{ik}$. In the above example we would have
\[
X(c_1) = 
\begin{bmatrix}
1(1-1) & 1(1-1) & 1(1-1) \\
0(1-0) & 1(1-0) & 1(1-0) \\
0(1-0) & 0(1-0) & 1(1-0) \\
0(1-0) & 0(1-0) & 1(1-0) \\
1(1-1) & 0(1-1) & 1(1-1) \\
1(1-1) & 1(1-1) & 1(1-1) \\
\end{bmatrix}
=
\begin{bmatrix}
0 & 0 & 0 \\
0 & 1  & 1 \\
0 & 0 & 1 \\
0 & 0 & 1 \\
0 & 0 & 0 \\
0 & 0 & 0 \\
\end{bmatrix}.
\]
This is useful when preserving the indices is of particular importance. In this case, $E(M)- A_1$ inherits the indices from $E(M)$.

We are now ready to present several equivalent necessary and sufficient conditions for a matrix to be a maximal representation. We present two sets of equivalent conditions, one using the Tutte formulation and one using the tropical formulation.

\begin{thm}
Let $X$ be a representation of dimension $n \times r$ and let $T$ be the Tutte matrix associated to the biparite graph described by $X$ as a  biadjacency matrix. Then $X$ is a maximal representation if and only if both (1) and (2) hold:
\begin{enumerate}
\item  rank($T$) = r (i.e. $X$ is viable),
\item for each column $c_i$ of $T$, we have
\[
\mathrm{rank}(T(c_i)) = k \implies \exists (k+1)\mathrm{-many~nonvanishing~} (k \times k)-\mathrm{minors~of~}T(c_i).
\]
\end{enumerate}

Alternatively, let $X$ be a binary representation of dimension $n \times r$. Then $X$ is a maximal representation if and only if both ($1^*$) and ($2^*$) hold:
\begin{itemize}
\item[\textit{1}$^*$.] 
\[
\det_{\mathrm{Trop}}(X) =
\bigoplus_{\phi: [r] \hookrightarrow [n]} \left(
x_{\phi(1)1} \odot ... \odot x_{\phi(r)r} 
\right)
=
r,
\]
\item[\textit{2}$^*$.] for each $k$, the matrix $X(c_k) = (\tilde{x}_{ik} x_{ij})$ has rank $l$ if and only if
\[
\bigoplus_{\phi,\psi: [l] \hookrightarrow [n]} \left(
(\tilde{x}_{ \phi(1)k} x_{ \phi(1) \psi(1)}) \odot ... \odot (\tilde{x}_{ \phi(l)k} x_{ \phi(l) \psi(l)})
\right)
= l
\]
achieves equality at least $(l+1)$-times. 
\end{itemize}
\end{thm}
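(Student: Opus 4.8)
The plan is to peel the biconditional apart into a single combinatorial fact about the deletions $M\setminus A_i$, and then to check that the Tutte conditions $(1)$–$(2)$ and the tropical conditions $(1^*)$–$(2^*)$ are two parallel dictionaries encoding that fact. First I would establish the structural reduction: $X$ is a maximal representation if and only if $X$ is viable and, for every column index $i$, the deletion $M\setminus A_i$ has no coloops. The Bondy--Welsh extension criterion says a single element $e\notin A_i$ may be absorbed into $A_i$ without changing $M$ exactly when $e$ is a coloop of $M\setminus A_i$; since that criterion is a biconditional and, as recorded in the proof of Theorem 2.4, any strictly larger presentation is reached by a chain of such single-element coloop absorptions, the absence of coloops in every deletion is precisely the assertion that $\mathcal{A}$ is maximal. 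Viability is carried along separately, using that every maximal representation is viable (as noted immediately after the definition of maximal representation). This reduces the theorem to two claims: viability is captured by $(1)$ and by $(1^*)$, and coloop-freeness of all the blocks is captured by $(2)$ and by $(2^*)$.

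Second I would dispatch the viability equivalences, which are the easy half. The classical rank of the Tutte matrix $T$ equals the maximum matching size in $\Delta[\mathcal{A}]$, namely $\mathrm{rank}(M)$, so $\mathrm{rank}(T)=r$ says exactly that some matching saturates all $r$ columns, i.e. $\mathrm{rank}(M)=r=|\mathcal{A}|$; this is condition $(1)$. For $(1^*)$, each term $x_{\phi(1)1}\odot\cdots\odot x_{\phi(r)r}$ of the generalized tropical determinant of a binary matrix tropically sums to the number of ones on the partial diagonal selected by $\phi$, so the tropical maximum equals the maximum matching size and equals $r$ precisely when a full matching exists. Hence $(1)\iff(1^*)\iff\text{viability}$, both collapsing to the same matching statement.

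Third, and this is the heart of the argument, I would translate coloop-freeness of $M\setminus A_i$ into the minor and multiplicity conditions. The block $X(c_i)$, equivalently the index-preserving $(\tilde{x}_{ik}x_{ij})$, represents $M\setminus A_i$, whose rank is the maximum matching number $k$ of the block's bipartite graph; its coloops are exactly the rows lying in every maximum matching. By K\"onig/Dulmage--Mendelsohn theory a row $e$ lies in every maximum matching if and only if deleting it lowers the matching number, i.e. if and only if no nonvanishing $k\times k$ minor of $T(c_i)$ avoids row $e$. Therefore $M\setminus A_i$ is coloop-free iff every row is avoided by some nonvanishing $k\times k$ minor, equivalently iff the row-supports of the nonvanishing $k\times k$ minors have empty common intersection. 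Read tropically, the same statement says the maximum value $l=k$ of the generalized tropical determinant of $X(c_i)$ is attained by a family of injections $\phi$ whose images have empty intersection. The bridge $(2)\iff(2^*)$ is then the exact correspondence: a nonvanishing $k\times k$ minor of $T(c_i)$, selecting $k$ rows and $k$ columns with a perfect matching, is the same data as a term $(\tilde{x}_{\phi(1)k}x_{\phi(1)\psi(1)})\odot\cdots\odot(\tilde{x}_{\phi(l)k}x_{\phi(l)\psi(l)})$ attaining the tropical maximum $l=k$, with row-set $\Im\phi$ and column-set $\Im\psi$.

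The main obstacle is to reconcile the stated cardinality thresholds, ``$(k+1)$-many nonvanishing minors'' and ``equality attained at least $(l+1)$ times,'' with the support-intersection criterion that genuinely encodes coloop-freeness. The forward implication is clean: I would prove the lemma that a coloop-free matroid of rank $k$ has at least $k+1$ bases, so it has at least $k+1$ distinct maximum matchings and hence at least $k+1$ nonvanishing minors, yielding the count. The converse is the delicate point, because a bare count does not by itself exclude a coloop that happens to be supported by many parallel maximum matchings — a single essential row can lie on arbitrarily many optimal diagonals, so ``many minors'' need not mean ``every row avoidable.'' The step I expect to require the most care is therefore sharpening the threshold to a covering statement: the $(l+1)$ optimal terms must be chosen so that their images jointly omit every row, not merely so that they are numerous. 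I would lean on the Dulmage--Mendelsohn decomposition of each block to separate the vertices saturated by every maximum matching from those that are avoidable, and to extract from it the precise count asserted in $(2)$ and $(2^*)$.
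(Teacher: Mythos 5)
Your reduction of maximality to viability plus coloop-freeness of every deletion $M\setminus A_i$, and your dictionary between bases of the block, nonvanishing $k\times k$ minors of $T(c_i)$, and optimal terms of the generalized tropical determinant, is exactly the skeleton of the paper's proof; the paper asserts the count condition in one direction by appeal to basis exchange and then dismisses the converse with ``as all of these properties are biconditional.'' The worry you raise about that converse is not merely a delicate point to be smoothed over: it is a genuine obstruction, and the last step of your plan --- extracting ``the precise count asserted in (2) and $(2^*)$'' from the covering criterion --- cannot be carried out, because for $r\geq 3$ the count is strictly weaker than coloop-freeness. Concretely, take $n=6$, $r=3$ and
\[
X=\begin{bmatrix}1&0&0\\ 1&0&0\\ 0&1&1\\ 0&0&1\\ 0&0&1\\ 0&0&1\end{bmatrix},
\]
i.e. $A_1=\{p,q\}$, $A_2=\{e\}$, $A_3=\{e,u,v,w\}$. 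Condition (1) holds, and every block meets the stated threshold: $X(c_1)$ (rows $e,u,v,w$, two columns) has rank $2$ and exactly $3=k+1$ nonvanishing $2\times 2$ minors; $X(c_2)$ has rank $2$ with $6$ of them; $X(c_3)$ (rows $p,q$) has rank $1$ with $2$ nonvanishing $1\times 1$ minors. Yet every basis of $M\setminus A_1$ contains $e$, so $e$ is a coloop there, Bondy--Welsh enlarges $A_1$ to $\{p,q,e\}$ without changing the matroid, and $X$ is not maximal. All three maximum matchings of the first block pass through the row $e$ --- precisely the ``single essential row lying on many optimal diagonals'' you warned about.

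So your diagnosis is correct and the repair is the one you name: condition (2) must be replaced by the covering statement that the nonvanishing $k\times k$ minors of $T(c_i)$ have no common row (equivalently, every non-loop row of the block is omitted by some maximum matching), which K\"onig/Dulmage--Mendelsohn does let you certify; the $(k+1)$-count is only a necessary consequence of this (and even the forward lemma, that a coloop-free rank-$k$ matroid has at least $k+1$ bases, deserves a real proof rather than a one-line appeal to basis exchange, since distinct minors need not come from distinct bases). As long as your argument aims to land on the theorem's stated threshold it cannot close, because the threshold itself is defective; note that the rank-$2$ classification in Section 3 survives, since a one-column block admits at most one optimal diagonal through any given row and there the count and covering conditions coincide.
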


Note that in $(2^*)$ it is necessary that
\[
\det_{\mathrm{Trop}}(X(c_k)) = l
\]
achieve equality at least $(l+1)$-times, but not sufficient as you could have several different solutions which agreed on their non-zero coordinates in the second formula. We prove the theorem using the Tutte formulation.
\begin{proof}
Let $X$ be our biadjacency matrix, $T$ be our Tutte matrix, $M$ be our matroid, and $\mathcal{A} = (A_i : i \in [r])$ be our presentation described by $X$.

Condition (1) is required for any part of this to make sense (i.e. one cannot talk about the partial order on presentations unless it is a $r(M)$-presentation.)

The requirement for $\mathcal{A}$ to be maximal is that for each $A_i$, the induced matroid on $E(M)-A_i$ must contain no coloops. If a basis of $E(M)-A_i$ has $k$ elements, then there must be at least $k+1$ such bases for there to be no coloops (as a consequence of the basis exchange axiom). $X(c_i)$ is the biadjacency matrix matrix corresponding to $E(M)-A_i$ with presentation $(A_1,...,A_{i-1},A_{i+1},...,A_r)$ and $T(c_i)$ is the Tutte matrix associated to the graph of this presentation. We know that rank($T(c_i)$) $=r(E(M)-A_i)$. Supposing that the rank of $E(M)-A_i$ is $k$, a basis of $E(M)-A_i$ appears as a matching of $k$ left vertices into $k$ right vertices. Then, by the properties of our Tutte matrix, if we restrict to submatrix associated to these vertices, its determinant is non-zero. There must be $k+1$ many distinct bases, and hence $k+1$ nonvanishing $k \times k$ minors.

As all of these properties are biconditional, this is both necessary and sufficient. 
\end{proof}

\section{Classification of rank 2 transversal matroids}

For what follows, fix $r = 2$. All matrices are taken as representative elements of $M_{n \times 2}(\Z / 2 \Z)/(A \sim P_R A P_C)$. Let $X = (x_{i1}~~ x_{i2})$ be one such matrix. Theorem 2.5 states that $X$ is maximal if and only if $X$ has tropical rank $2$ and $X(c_1) = (x_{i2}(1-x_{i1}))$, $X(c_2) = (x_{i1}(1-x_{i2}))$ both achieve their rank $l$ at least $(l+1)$ times.

For the first property, we see that all that is required is that $X$ have a matching. For the second property, we note that $X(c_j)$ is a single column and so can only have rank $1$ or rank $0$. In the rank $1$ case, the number of times that rank is achieved is just the sum of our $X(c_j)$ entries- a property that is exclusive to the $r=2$ case. So we have the following lemma.

\begin{lem}
A representation $X = (x_{ij}) \in \{0,1\}^{n \times 2}$ is maximal if and only if each of the following holds true:
\[
\left(
2 = \bigoplus_{\phi: [2] \to [n]} x_{\phi(1)1} \odot x_{\phi(2)2}
\right)
~~;~~
\left(
\sum_{i}
x_{i2}(1-x_{i1}) \neq 1
\right)
~~;~~
\left(
\sum_{i} x_{i1}(1-x_{i2}) 
\neq 1
\right)
\]
\end{lem}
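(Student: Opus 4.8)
The plan is to obtain the Lemma as the $r=2$ specialization of the tropical formulation ($1^*$)--($2^*$) of Theorem 2.5, exploiting the fact that when $r=2$ each block $X(c_k)$ degenerates to (essentially) a single column, so that its rank condition reduces to a count of nonzero entries.

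First I would note that condition ($1^*$), namely $\det_{\mathrm{Trop}}(X)=r$, is for $r=2$ word-for-word the first displayed condition of the Lemma, so viability is immediate. It then remains to translate condition ($2^*$) on the two blocks. Consider $X(c_1)=(\tilde{x}_{i1}x_{ij})$. For every $i$ we have $\tilde{x}_{i1}x_{i1}=(1-x_{i1})x_{i1}=0$, so the $j=1$ column of $X(c_1)$ vanishes identically and the only surviving column is the $j=2$ column, with $i$th entry $x_{i2}(1-x_{i1})$. Thus $X(c_1)$ is a single nonzero column and its rank $l$ is $0$ or $1$, with $l=1$ precisely when $\sum_i x_{i2}(1-x_{i1})\geq 1$.

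Next I would run the matching count. In the binary setting a size-$l$ matching contributes the tropical product $1\odot\cdots\odot 1=l$, so $\det_{\mathrm{Trop}}(X(c_1))=l$ is attained once for each all-ones matching of size $l$. For the single column $X(c_1)$ a size-$1$ matching is just a choice of a row $i$ with $x_{i2}(1-x_{i1})=1$; distinct matchings are distinct such rows, so the number of times the value $1$ is attained equals $\sum_i x_{i2}(1-x_{i1})$. Condition ($2^*$) for this block then reads: if $l=0$ the requirement of a single $0\times 0$ minor is vacuous and holds automatically, while if $l=1$ the value must be attained at least $l+1=2$ times, i.e. $\sum_i x_{i2}(1-x_{i1})\geq 2$. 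Combining the two cases, the block condition for $X(c_1)$ is equivalent to $\sum_i x_{i2}(1-x_{i1})\neq 1$, the Lemma's second inequality; the identical argument applied to $X(c_2)$, whose surviving entries are $x_{i1}(1-x_{i2})$, yields the third.

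The step I expect to require the most care is the one flagged by the remark following Theorem 2.5: in general, attaining $\det_{\mathrm{Trop}}(X(c_k))=l$ the required $(l+1)$ times is necessary but not sufficient, since distinct maximizing index pairs may agree on their nonzero coordinates. The crux of this proof is that this obstruction cannot arise here, because $r=2$ forces each block to be a single column: a size-$1$ matching is completely determined by its unique nonzero entry, so distinct maximizers are genuinely distinct, and the count of maximizers coincides exactly with the column sum. Verifying this collapse is what makes ($2^*$) reduce cleanly to the stated inequalities, with every other step a direct substitution into Theorem 2.5.
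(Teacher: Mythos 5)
Your proposal is correct and follows essentially the same route as the paper: the paper also obtains the Lemma by specializing Theorem 2.5 to $r=2$, observing that each block $X(c_j)$ collapses to a single column of rank $0$ or $1$, so that the multiplicity condition becomes the statement that the column sum is not equal to $1$. Your extra care in checking that the ``necessary but not sufficient'' caveat after Theorem 2.5 cannot bite in the single-column case is a worthwhile addition that the paper leaves implicit.
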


\begin{thm}
A matrix $X$ of dimension $n \times r$ is a maximal representation of a matroid of rank $r$ if and only if a submatrix of $X$ can be written as $P_R X' P_C$ where $P_R,P_C$ are row and column permutation matrices and $X'$ is an element of
\[
\left\lbrace
\begin{bmatrix}
1 & 1 \\
1 & 1 \\
1 & 1 \\
1 & 1 \\
\vdots & \vdots \\
1 & 1 \\
1 & 1
\end{bmatrix}
,
\begin{bmatrix}
1 & 1 \\
1 & 1 \\
1 & 1 \\
1 & 1 \\
\vdots & \vdots \\
1 & 1 \\
0 & 0
\end{bmatrix}
,
...
,
\begin{bmatrix}
1 & 1 \\
1 & 1 \\
0 & 0 \\
0 & 0 \\
\vdots & \vdots \\
0 & 0 \\
0 & 0
\end{bmatrix}
\right\rbrace
\bigcup
\left\lbrace
\begin{bmatrix}
1 & 1 \\
1 & 1 \\
1 & 1 \\
\vdots & \vdots \\
1 & 1 \\
1 & 0 \\
1 & 0
\end{bmatrix}
,
\begin{bmatrix}
1 & 1 \\
1 & 1 \\
1 & 1 \\
\vdots & \vdots \\
1 & 0 \\
1 & 0 \\
1 & 0
\end{bmatrix}
,
...
,
\begin{bmatrix}
1 & 1 \\
1 & 1 \\
1 & 0 \\
\vdots & \vdots \\
1 & 0 \\
1 & 0 \\
1 & 0
\end{bmatrix}
,
\begin{bmatrix}
1 & 1 \\
1 & 0 \\
1 & 0 \\
\vdots & \vdots \\
1 & 0 \\
1 & 0 \\
1 & 0
\end{bmatrix}
\right\rbrace
\bigcup
\left\lbrace
\begin{matrix}
\begin{bmatrix}
1 & 0
\end{bmatrix}
&  \left. \right\rbrace i :
(n-2) \geq i \geq 2 \\
\begin{bmatrix}
0 & 1 
\end{bmatrix}
& \left. \right\rbrace (n-i)
\end{matrix}
\right\rbrace
\]
\end{thm}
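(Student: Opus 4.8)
The plan is to derive everything from Lemma 3.1, which already reduces the maximality of an $n\times 2$ binary matrix to three conditions, and then to run a finite case analysis on the possible row types. First I would classify the rows of $X$: every row is one of the four patterns $(1,1)$, $(1,0)$, $(0,1)$, $(0,0)$, and I would write $p,q,s,t$ for the number of rows of each type respectively, so that $p+q+s+t=n$. Because the equivalence in force is exactly $A\sim P_RAP_C$, grouping equal rows by a suitable $P_R$ lets me assume $X$ is sorted by type, and the datum $(p,q,s,t)$ together with the choice of which column is ``column $1$'' is then a complete invariant of $X$ modulo the allowed row and column permutations.

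Next I would rewrite the three conditions of Lemma 3.1 in terms of $(p,q,s,t)$. The sum $\sum_i x_{i2}(1-x_{i1})$ counts precisely the rows of type $(0,1)$, so the second condition becomes $s\neq 1$; likewise $\sum_i x_{i1}(1-x_{i2})$ counts the rows of type $(1,0)$, so the third becomes $q\neq 1$. The tropical condition $\det_{\mathrm{Trop}}(X)=2$ unwinds, for a binary matrix, to the existence of two distinct rows, one carrying a $1$ in column $1$ and one carrying a $1$ in column $2$; in terms of the counts this is the single requirement that a size-$2$ matching exists, i.e.\ $p\ge 2$, or $p=1$ with $q\ge 1$ or $s\ge 1$, or $p=0$ with both $q\ge 1$ and $s\ge 1$. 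For the \textbf{if} direction I would then check each listed canonical form against these three numerical conditions: the first family has $(p,q,s)=(a,0,0)$ with $a\ge 2$, the second has $p\ge 1$, $q\ge 2$, $s=0$, and the third has $p=0$, $q\ge 2$, $s\ge 2$; in every case $q\neq 1$, $s\neq 1$, and a $2$-matching is present, so each $X'$ is maximal by Lemma 3.1. Since adjoining all-zero (loop) rows never creates a coloop in any deletion, maximality is preserved under the passage from $X'$ to $P_RX'P_C$ with loop rows appended, which is exactly the submatrix relation in the statement.

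For the \textbf{only if} direction I would start from a maximal $X$, sort its rows by type, and separate off the $t$ loop rows, which constitute the deleted part realizing $X$ as a row-enlargement of a submatrix. After a possible column swap $P_C$ normalizing, say, $q\ge s$, I would run the case split on whether each of $p,q,s$ vanishes, using $q\neq 1$, $s\neq 1$, and the matching condition to force the surviving rows into one of the three shapes, with the boundary multiplicities ($a\ge 2$ in the first family, $q\ge 2$ in the second, $2\le i\le n-2$ in the third) reading off directly from the $\neq 1$ and matching constraints.

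The step I expect to be the main obstacle is the exhaustiveness of this case split: one must verify that \emph{every} admissible profile $(p,q,s)$ surviving the three conditions of Lemma 3.1 is captured, up to $P_R$, $P_C$, and loop rows, by a genuine member of one of the three families, and in particular that configurations which mix rows of type $(1,1)$, $(1,0)$, and $(0,1)$ simultaneously reduce to a listed form rather than falling between the families. Closely tied to this is making the submatrix reduction precise: one has to confirm that the deleted rows are exactly the loop rows, so that $X=P_RX'P_C$ holds on the chosen row subset on the nose, rather than merely up to further parallel or loop elements, since adjoining a \emph{non}-loop row can destroy maximality and would break the reduction if admitted.
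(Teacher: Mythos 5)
Your overall strategy is the same as the paper's: classify rows into the four types, record the multiplicities (your $(p,q,s,t)$ is the paper's $(a_1,a_2,a_3,a_4)$), translate the three conditions of Lemma 3.1 into $q\neq 1$, $s\neq 1$, and the existence of a $2$-matching, normalize by a column swap, and case-split. Your translation of the conditions is correct, and your verification of the ``if'' direction for the listed forms (first family $q=s=0$, $p\ge 2$; second family $p\ge1$, $q\ge2$, $s=0$; third family $p=0$, $q,s\ge2$) matches the paper's ``routine computation.''

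The gap is precisely the one you flag but do not close, and it is not a deferrable technicality. Take the profile $(p,q,s,t)=(1,2,2,0)$, i.e.\ the $5\times 2$ matrix with one row $[1~~1]$, two rows $[1~~0]$, and two rows $[0~~1]$. It satisfies all three conditions of Lemma 3.1, hence is maximal; it has no loop rows to delete; and it is not, up to $P_R$ and $P_C$, any listed form, since each family uses at most two distinct nonzero row types. So your reduction in which ``the deleted rows are exactly the loop rows'' cannot produce the classification: the only-if direction genuinely requires discarding non-loop rows (here, e.g., the $[1~~1]$ row, leaving a third-family block), and this is exactly how the paper's proof silently reads ``submatrix'' when it says ``if $a_3\ge 2$ then $X$ is of our last form.'' Your companion worry about the converse is equally well-founded: under that liberal reading the ``if'' direction is false as stated --- the $3\times 2$ matrix with rows $[1~~1]$, $[1~~1]$, $[1~~0]$ contains the $2\times 2$ all-ones block yet has $q=1$ and is not maximal --- and the paper only ever checks maximality for $X$ equal to a listed form up to permutation (possibly plus loop rows). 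So the ``main obstacle'' you name is where the theorem's two directions demand incompatible readings of ``submatrix''; a complete proof must either restate the theorem (e.g., allow the second and third families to carry additional $[1~~1]$ and $[0~~0]$ rows) or handle the mixed profiles $p\ge1$, $q\ge2$, $s\ge2$ explicitly, and your plan as written does neither.
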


To clarify the notation of the last set, these are matrices with $i$ many rows of the form $[1~~0]$ and $(n-i)$ many rows of the form $[0~~1]$ where $i$ is an integer in $[2,n-2]$. These maximal representations only arise when $n \geq 4$. Moreover, given a maximal representation of dimension $n \times 2$, we can create from it a maximal representation of dimension $(n+k) \times 2$ by adding $k$ rows of the form $[0~~0]$ (corresponding to the addition of loops) or, if $k$ is even, by adding $k$ identical rows either of the form $[1~~0]$ or $[0~~1]$.
\begin{proof}
That these are all maximal representations follows from routine computation. We first note that they all have a matching and then note that $X(c_j)$ has either more than two $1$s or is empty.

So, suppose that we had a matrix $X$, a maximal representation of dimension $n \times 2$. Toward a contradiction, suppose that $X$ has no submatrix equivalent to the forms above. Let $\mathbf{a} = (a_1,a_2,a_3,a_4)$ be a vector such that $a_1$ is the number of rows of the form $[1~~1]$ present in $X$, $a_2$ is the number of rows of the form $[1~~0]$, $a_3$ is the number of rows of the form $[0~~1]$, and $a_4$ is the number of rows of the form $[0~~0]$. By column exchange, we may assume without loss of generality that $a_2 \geq a_3$.

Note that $a_4 \leq (n-2)$, as a maximal representation must at least be viable. If $a_4 \neq 0$, then consider the matrix of dimension $(n-a_4) \times 2$ obtained by removing the all zero rows from $X$. This must be a maximal representation for a matroid of rank $2$ on $[n-a_4]$ as the removal of loops deletes elements without affecting presentations. Then replace $X$ with this matrix and continue. For this reason, we suppose that $a_4 = 0$.

By hypothesis, $a_2 \geq a_3$. If $a_2 = 0$, then our $X$ is of the first form in the union above, so this cannot be. If $a_3 \geq 2$, then $X$ is of our last form. Therefore, $a_3 \leq 1$ (and $1 \leq a_2$).

If $a_3 = 0$, then our matrix is of the middle form, so then we must have that $a_3 = 1$. However, if $a_3 = 1$, then $X(c_1) = [0~~1]$, which has rank $1$ but only one non-zero entry. This contradicts the hypothesis of maximality. Therefore, all maximal representations of dimension $n \times 2$ must have this form.
\end{proof}

\section{Computational complexity}

Given any representation of a transversal matroid, we can now obtain a maximal representation by first projecting to an $r$-presentation and then by applying Algorithm 1. However, an explicit construction of arbitrary maximal representations is sorely lacking. One could hope that by constructing a random binary matrix and applying Algorithm 1, a random maximal representation could be obtained- but as a heuristic against this, we note that for large $n,r$ there are many matrices that yield the all $1$s matrix but perhaps fewer for other maximal representations. Alternatively, one could determine whether a random matrix was maximal and throw it out from the sampling, but this encounters an issue with equivalence classes due to permutations.

Another barrier to these approaches is complexity. We have the following useful lemma from \cite{Thesis_Rank}.

\begin{lem}
Determining the (generalized) tropical rank of a matrix $X$ is an NP-Hard problem.
\end{lem}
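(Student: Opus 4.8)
The statement is a known result imported from \cite{Thesis_Rank}, so the plan is to reconstruct the complexity-theoretic reduction that establishes it. First I would phrase the task as a decision problem: given a matrix $X \in \R^{n \times r}$ and an integer $k$, decide whether the tropical rank of $X$ is at least $k$, i.e. whether $X$ has a tropically nonsingular $k \times k$ minor. Computing the rank is at least as hard as this decision problem (one can binary-search over $k$), so it suffices to prove the latter NP-hard. A single $k \times k$ minor can be tested for tropical nonsingularity in polynomial time: evaluating $\det_{\mathrm{Trop}}$ is an optimal assignment problem, solvable by the Hungarian algorithm, and one further checks whether the optimum is attained by a unique permutation. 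Thus the hardness cannot come from evaluating one minor; it must come from the exponential search over which $k$ rows and $k$ columns to retain.

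The reduction itself I would carry out in the binary case, which is already expressive enough and is exactly the setting relevant to Theorem 2.5. For a square $0/1$ matrix $B$, the value $\det_{\mathrm{Trop}}(B) = \max_{\sigma} \sum_i b_{i\sigma(i)}$ equals the size of a maximum matching in the bipartite graph of $1$-entries, and $B$ is tropically nonsingular exactly when this maximum is attained by a unique permutation; in particular a $k \times k$ submatrix whose bipartite graph has a unique perfect matching (equivalently, which is permutation-equivalent to a triangular matrix with $1$'s on the diagonal) is nonsingular. The reduction would start from a suitable NP-complete problem — $3$-SAT is the cleanest generic choice, though the matching-theoretic character of the statement also invites a reduction from $3$-dimensional matching — and build from an instance a binary matrix $X$ together with a threshold $k$, using gadget rows and columns that encode variables and clauses (respectively the ground sets and triples), arranged so that a satisfying assignment (respectively a perfect $3$-matching) corresponds to a choice of $k$ rows and $k$ columns inducing a submatrix with a unique optimal generalized diagonal.

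Two verifications then complete the argument. In the forward direction one shows that a solution of the source instance yields the prescribed tropically nonsingular $k \times k$ minor; in the reverse direction one shows that any tropically nonsingular $k \times k$ minor of $X$ forces a solution, by reading off the unique optimal permutation as an assignment or matching. One also checks that $X$ has size polynomial in the input, so that the reduction runs in polynomial time.

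The main obstacle is the uniqueness condition. It is easy to arrange that the intended submatrix has a maximum matching of the right size; the delicate part is guaranteeing that this optimum is attained by exactly one permutation, since any tie makes the minor tropically singular and breaks the correspondence. Dually, one must prevent spurious nonsingular minors — submatrices that happen to be triangular-equivalent for reasons unrelated to a genuine solution — from arising out of unintended interactions between gadgets. Controlling ties in both directions, typically by padding the gadgets with rows and columns whose $1$-patterns force a rigid triangular order, is where essentially all the work of the reduction lies; the polynomial-time evaluation of $\det_{\mathrm{Trop}}$ guarantees only that verifying a candidate minor is easy, not that locating one is.
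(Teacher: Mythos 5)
First, note that the paper does not actually prove this lemma: it is imported verbatim from \cite{Thesis_Rank}, so there is no in-paper argument to compare yours against. Judged on its own terms, your proposal correctly frames how such a hardness proof must be organized --- pass to the decision problem ``does $X$ have a tropically nonsingular $k\times k$ minor?'', observe that testing a \emph{single} candidate minor is polynomial (an assignment problem plus a uniqueness check), and conclude that the hardness must live in the choice of rows and columns --- but it is a plan rather than a proof. The entire mathematical content of the result is the gadget construction you defer: you name candidate source problems ($3$-SAT, $3$-dimensional matching), assert that rows and columns ``encode variables and clauses,'' and then concede that controlling ties and excluding spurious nonsingular minors ``is where essentially all the work of the reduction lies.'' Without an explicit matrix, an explicit threshold $k$, and both verifications carried out, nothing has been established; the reverse direction in particular (``any nonsingular $k\times k$ minor forces a solution'') is exactly the step that tends to fail for naive encodings, since unintended submatrices can be permutation-equivalent to triangular matrices for reasons unrelated to the source instance.

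There is also one concrete technical slip in the sketch. You are right that a $0/1$ minor whose bipartite graph of $1$-entries has a unique perfect matching is tropically nonsingular, but the converse you would need when ``reading off'' a solution is false: a $0/1$ minor is nonsingular whenever the maximum of $\sum_i b_{i\sigma(i)}$ over permutations is attained exactly once, and that maximum may well be less than $k$, i.e.\ the unique optimal permutation may pass through $0$-entries and correspond to no perfect matching at all. So ``nonsingular minor'' does not translate directly into ``unique perfect matching,'' and any gadget analysis must dispose of these sub-maximal nonsingular minors separately. If your goal is only to justify the lemma as used in the paper, the right move is to cite it as the paper does; if the goal is to reprove it, the gadgets must be written down and this case handled.
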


\subsection{Determining matroid rank from a presentation is NP-hard}

With that in mind, we prove the following about viability and maximality.

\begin{thm}
Determining the viability of a representation $X$ of dimension $n \times r$ is an NP-Hard problem while determining maximality is of exponential time complexity.
\end{thm}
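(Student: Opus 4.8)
The plan is to derive both halves from the tropical characterization of Theorem 2.5 together with the NP-hardness of the generalized tropical rank recorded in the Lemma of \cite{Thesis_Rank}, by reducing the tropical rank problem to each of the two decision problems.

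For viability I would work from condition $(1^*)$, which says that $X$ is viable exactly when
\[
\det_{\mathrm{Trop}}(X) = \bigoplus_{\phi: [r] \hookrightarrow [n]} \left( x_{\phi(1)1} \odot \cdots \odot x_{\phi(r)r} \right) = r.
\]
First I would recast this as a top-order generalized tropical rank question: the left-hand side is the top tropical minor of $X$, and asking whether it reaches $r$ is asking whether $X$ carries a full-rank $r \times r$ tropical minor. I would then reduce an arbitrary instance of the generalized tropical rank problem to such a viability instance by padding the input matrix with auxiliary rows and columns that pin down the behavior of the lower minors, so that the padded representation is viable if and only if the original matrix has the prescribed tropical rank. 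Pushing the Lemma through this reduction gives NP-hardness of viability.

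For maximality I would keep $(1^*)$ and add condition $(2^*)$, whose point is that it is itself a tropical-rank condition on the blocks. For each column $c_k$ the block $X(c_k) = (\tilde{x}_{ik} x_{ij})$ must satisfy
\[
\det_{\mathrm{Trop}}(X(c_k)) = l, \qquad l = \mathrm{rank}(X(c_k)),
\]
with this optimum attained at least $(l+1)$ times. For hardness I would reduce the generalized tropical rank problem to maximality by constructing a representation one of whose blocks encodes an arbitrary tropical-rank instance, arranging the gadget so that the multiplicity required by $(2^*)$ holds precisely when the instance meets its target rank. For the matching exponential upper bound I would exhibit the naive procedure: for each of the $r$ blocks, enumerate the injections $\phi$ that realize the optimal tropical assignment and check whether their number reaches $l+1$; since there are exponentially many such $\phi$ in $n$, the procedure terminates in exponential time.

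The hard part will be the viability reduction. Taken literally on binary inputs, condition $(1^*)$ only asserts that $\Delta[\mathcal{A}]$ has a matching saturating all $r$ columns, which is a benign matching question; the NP-hardness must therefore be carried entirely by the gap between the tropical determinant \emph{value} and the tropical \emph{rank}, that is, by the uniqueness and multiplicity of optimal assignments rather than their mere existence. Designing a polynomial-size padding gadget that faithfully transports an arbitrary tropical-rank instance across this gap into a pure viability statement is the delicate step; once it is in place, the maximality claim follows quickly, since $(2^*)$ already speaks the language of tropical rank and the block enumeration supplies the exponential-time bound.
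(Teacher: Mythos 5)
There is a genuine gap: the heart of the theorem is the reduction from tropical rank to viability, and you do not actually construct it --- you explicitly defer ``designing a polynomial-size padding gadget'' as the delicate step. The paper's proof supplies a concrete reduction and it is not a one-shot padding gadget but an iterated oracle (Turing) reduction: given a tropical-rank instance $X$, query the viability oracle on $X$; if it fails, append a single row of all $1$s to form $X^{(1)}$, which raises the tropical rank of a rank-deficient matrix by exactly one, and query again; the first index $i$ at which $X^{(i)}$ becomes viable yields $\mathrm{t\text{-}rank}(X) = r - i$. This uses at most $r$ oracle calls and is the entire content of the hardness claim. Your closing observation --- that for binary inputs condition $(1^*)$ is just a column-saturating matching condition, so the hardness must live in the gap between the tropical determinant \emph{value} and the tropical \emph{rank} --- is astute and points at a real tension in the theorem, but you leave it unresolved; the paper resolves it (implicitly, and one may debate how satisfactorily) by identifying viability with full tropical rank and running the row-appending reduction against that identification. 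Without either the paper's gadget or one of your own, the NP-hardness half of the statement is unproven in your write-up.

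On maximality you also diverge from what is actually claimed. The theorem asserts only that maximality is decidable in exponential time (relative to a tropical-rank oracle); it does not assert NP-hardness of maximality, so your proposed hardness reduction into a block is both unnecessary and, again, not constructed. For the exponential upper bound, your procedure enumerates the injections $\phi$ realizing the optimum for each block and checks whether their number reaches $l+1$; the paper warns immediately after Theorem 2.5 that this count is necessary but \emph{not} sufficient, because distinct optimal injections can agree on their nonzero coordinates and hence witness the same basis. The paper's procedure instead enumerates the ${n \choose T_j}$ row subsets $I$ of each block $X(c_j)$, calls the tropical-rank oracle on each submatrix $M_I$, and counts those of full tropical rank, comparing the count to $T_j$. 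Your check as stated would accept some non-maximal representations.
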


\begin{proof}
Toward proving this, we show that viability is equivalent to the tropical rank problem. Suppose that we had an oracle for computing the tropical rank of a matrix. Then given a binary representation $X$, its viability can be computed by the oracle- as the tropical rank. Meanwhile toward maximality, construct $X(c_j)$ for each $j$ indexing the columns. Let $T_j = trank(X(c_j))$. Then for our $X(c_j)$, take the submatrix $M_I$ of $X(c_j)$ where $I$ is a subset of the rows of size $T_j$. Initialize a counter at $0$. For each $M_I$ with tropical rank equal to $T_j$, add one to the counter. Then condition 2 is satisfied if the counter is strictly greater than $T_j$. If for any $j$ this is false, then $X$ is not maximal. If for all $j$ this is true, then $X$ is maximal. In terms of operations, we have two nested loops present- one over the $r$ columns for the construction of $X(c_j)$ and one over ${[n] \choose r}$ in forming $M_I$. So we have that our algorithm for determining maximality using our tropical oracle is
\[
T(n,r) + (r \cdot p_1(n,r)) \cdot \left( T(n,r)+
{[n] \choose r} \cdot p_2(n,r) \cdot T(n,r)
\right)
\]
where $T$ is the operational time of computing the tropical rank of a matrix of size at most $n \times r$, $p_1$ is the construction of $X(c_j)$, and $p_2$ is the construction of $M_I$. Thus, given an oracle for computing the tropical rank, maximality is still an exponential time complexity problem. For fixed rank, however, one can bound it by the usual bound on the choose function $O({n \choose k}) = O(n^k)$.

Conversely, given an oracle to determine viability, we can compute the tropical rank of a matrix. A naive approach suggests checking the viability of all submatrices of $X$ to determine the rank, but doing so yields a complexity in $O({n \choose k})$. Instead, first check whether $X$ is viable or not. If it is, then it has tropical rank $r$ and we are done. If not, append a row of all $1$s to $X$, call it $X^{(1)}$ This increases the tropical rank of a rank-deficient matrix by $1$. Check the viability of $X^{(1)}$ as an $(n+1) \times r$ matrix. Repeat this procedure until you obtain $X^{(i)}$, a viable $(n+i) \times r$ matrix. The tropical rank of $X$ is $r-i$.
\end{proof}

Therefore, determining viability is equivalent to the tropical determinant while maximality seems to be even worse. This implies that determining the rank of a transversal matroid is an NP-hard problem.

\bibliography{manuscript.bbl}{}

\end{document}